\documentclass{amsart}[12pt]

\newtheorem{theorem}{Theorem}[section]

\newtheorem{proposition}[theorem]{Proposition}
\newtheorem{corollary}[theorem]{Corollary}

\theoremstyle{definition}
\newtheorem{definition}[theorem]{Definition}

\newtheorem{remark}[theorem]{Remark}



\usepackage{amscd,amssymb}

\begin{document}

\title
{A note on the $\mathbb{Z}_{2}\times \mathbb{Z}_{2}$-graded identities of $E \otimes E$ over a finite field}
\author{ Lu\'is Felipe Gon\c{c}alves Fonseca}

\address{Lu\'is Felipe Gon\c{c}alves Fonseca: Instituto de Ci\^encias Exatas e Tecnl\'ogicas \\
Universidade Federal de Vi\c{c}osa \\
Florestal, MG, Brazil. ZIP Code: 35690-000.}
\email{luisfelipe@ufv.br}

\date{}

\maketitle

\begin{abstract}
Let $F$ be a finite field of $char F = p$ and size $|F| = q$. Let $E$ be the unitary infinity dimensional Grassmann algebra. In this short note, we describe the $\mathbb{Z}_{2}\times \mathbb{Z}_{2}$-graded identities of $E_{k^{*}}\otimes E$, where $E_{k^{*}}$ is the Grassmann algebra with a specific $\mathbb{Z}_{2}$-grading. In the end, we discuss about the $\mathbb{Z}_{2}\times \mathbb{Z}_{2}$-graded GK-dimension of $E_{k^{*}}\otimes E$ in $m$ variables.

\end{abstract}

\section{Introduction}

Infinite-dimensional unitary Grassmann algebra $E$ is an important topic in PI-theory. Kemer \cite{Kemer} proved that every non-trivial variety of algebras over a field of characteristic zero is generated by the Grassmann hull of some finite dimensional algebra.

Many algebraist have studied the Grassmann algebra in a combinatorial way. In this approach, they investigate the identities and other properties.

In 1970's, Regev and Krakovski \cite{Regev 2} described the ordinary polynomial identities of $E$ over a field of characteristic zero. Years later, other mathematicians \cite{Ochir}, \cite{Giambruno 2} \cite{Regev} contributed to the study of polynomial identities of Grassmann algebra over fields of positive characteristic.

Beyond the ordinary case, the identities of Grassmann algebra have been studied in many other contexts in the recent years. One of this is the graded case. For this context, we can cite for example the contribution of \cite{Centrone-2}, \cite{Di Vincenzo 1}, \cite{Di Vincenzo 2}, \cite{Fonseca}, \cite{Giambruno 1}.

Other interesting task in PI-theory is the polynomial identities of square tensor Grassmann algebra. This algebra is one of the algebras covered by Kemer in Kemer’s Tensor Product Theorem.

In 1982, Popov \cite{Popov} described the ordinary polynomial identities of $E \otimes E$ over a field of characteristic zero. The description of the polynomial identities of $E \otimes E$ over a field of positive characteristic is still an open question. Despite this, there were progress in the graded case \cite{Centrone-1},\cite{Di Vincenzo 3}, \cite{Plamen} and \cite{Silva}. In this list, all authors worked over infinite fields of characteristic different from $2$. In \cite{Di Vincenzo 3}, the author worked over a field of characteristic zero.

In this paper, we combine the methods of Centrone and da Silva \cite{Centrone-1} and Fonseca \cite{Fonseca}. Our main goal is to describe the $\mathbb{Z}_{2}\times \mathbb{Z}_{2}$-graded polynomial identities of $E_{k^{*}} \otimes E$. Here $E_{k^{*}} \otimes E$ is viewed as an algebra over a finite field of characteristic different from $2$.

\section{Preliminaries: Grassmann Algebra}

Let $F$ be a finite field of characteristic $char (F) = p > 2$ and size $|F| = q$. In this paper, all vector spaces and all algebras will be over $F$.
We denote the set of the positive integers by $\mathbb{N}$. Let $\mathbb{Z}_{\geq 0} = \mathbb{N} \cup \{0\}$. We denote the set of the first $n$ positive integers by $\widehat{n}$.

Let $A$ be an algebra. We define the commutator of two elements $a_{1},a_{2}$ by $[a_{1},a_{2}] := a_{1}a_{2} - a_{2}a_{1}$. Inductively, the commutator of $n \geq 3$ elements $a_{1},\ldots,a_{n} \in A$ is defined by $[a_{1},\ldots,a_{n}] := [[a_{1},\ldots,a_{n-1}],a_{n}]$.

Let $G$ be a group. An algebra $A$ is graded when there exist subspaces $\{A_{g}\}_{g \in G}$ such that $A = \bigoplus_{g \in G}A_{g}$ and $A_{g}A_{h} \subset A_{gh}$ for all $g,h \in G$. An element $a \in A_{g}$ is called homogeneous of $G$-degree $g$. In this situation, let us denote: $\alpha(a) = g$.
\begin{definition}
Let $U$ be a vector space with an infinite countable basis $\newline \{e_{1},\ldots,e_{n},\ldots\}$. We denote the infinity-dimensional unitary Grassmann algebra of $U$ by $E$. We let $E^{*}$ denote the infinite-dimensional non-unitary Grassmann algebra of $U$.
\end{definition}
It is well known that $E = E_{0} \oplus E_{1}$, where:
\begin{center}
$E_{0} = span_{F}\{1_{E}, e_{i_{1}}.\ldots.e_{i_{2n}}| i_{1} < \ldots < i_{2n}, n \geq 1 \}$, \\
$E_{1} = span_{F}\{e_{i_{1}}.\ldots.e_{i_{2n+1}}| i_{1} < \ldots < i_{2n+1}, n \geq 0 \}$.
\end{center}
$(E_{0},E_{1})$ is called the canonical $\mathbb{Z}_{2}$-grading of $E$. The set
\begin{center}
$\mathcal{B} = \{1_{E}, e_{i_{1}}.\cdots.e_{i_{n}} | i_{1} < \ldots < i_{n}, n \geq 1 \}$
\end{center}
is called the canonical basis of $E$.

Let $b = e_{i_{1}}.\cdots.e_{i_{n}} \in \mathcal{B}$. The set $supp(b) = \{e_{i_{1}},\ldots,e_{i_{n}}\}$ is said to be the support of $b$. We denote the cardinality of $supp(b)$ by $l(b)$.

\begin{remark}

Let us consider a mapping $||.||: \{e_{1},e_{2},\ldots,\ldots\} \rightarrow \mathbb{Z}_{2}$ such that $||e_{j_{1}}.\ldots.e_{j_{m}}|| = ||e_{j_{1}}|| + \ldots + ||e_{j_{m}}||$. The mapping $||.||$ induces a $\mathbb{Z}_{2}$-grading on $E$. The map $||.||_{k^{*}}$ is defined by

$$||e_{i}||_{k^{*}}=\begin{cases}
1,&\mbox{for} \quad i = 1,\ldots,k \\
0, &\mbox{for}\quad i = k+1,\ldots.
\end{cases}
$$
\end{remark}

\section{Preliminaries: Free Algebra}

Let $T = \{t_{1},\ldots,t_{n},\ldots\}$ be an infinite countable set of variables. We will denote the ordinary free algebra generated by $T$ by $F\langle T \rangle$.

Let $V = \{v_{1},\ldots,v_{n},\ldots\}, W = \{w_{1},\ldots,w_{n},\ldots\}, X = \{x_{1},\ldots,x_{n},\ldots\}, Y = \{y_{1},\ldots,y_{n},\ldots\}$ be sets of infinite variables. $V,W,X,Y$ will denote variables of $\mathbb{Z}_{2}\times \mathbb{Z}_{2}$-degree $(0,0),(1,0),(0,1),(1,1),$ respectively.

\begin{definition}
Let $Z = V \cup W \cup X \cup Y$. The unitary free algebra freely generated by $Z$ will be denoted by $F\langle Z \rangle$. The algebra $F\langle Z \rangle$ has a natural $\mathbb{Z}_{2}\times \mathbb{Z}_{2}$-grading, where $\alpha(1) = (0,0)$ and:
\begin{center}
$F\langle Z \rangle_{(i,j)} = span_{F}\{z_{k_{1}}.\cdots.z_{k_{n}}| \alpha(z_{k_{1}}) + \ldots + \alpha(z_{k_{n}}) = (i,j)\}, \ \ (i,j) \in \mathbb{Z}_{2} \times \mathbb{Z}_{2}$.
\end{center}
\end{definition}

\begin{remark}
The algebra $F\langle Z \rangle$ has a natural $\mathbb{Z}_{2}$-grading too:
\begin{center}
$F\langle Z \rangle_{0} = F\langle Z \rangle_{(0,0)} \oplus F\langle Z \rangle_{(1,0)}$, $F\langle Z \rangle_{1} = F\langle Z \rangle_{(0,1)} \oplus F\langle Z \rangle_{(1,1)}$.
\end{center}
\end{remark}

\begin{remark}
We shall order the elements of $Z$. We consider
$v_{1} < v_{2} < \ldots < v_{n_{1}} < \ldots < w_{1} < \ldots < w_{n_{2}} < \ldots < x_{1} < \ldots < x_{n_{3}} < \ldots < y_{1} < \ldots < y_{n_{4}} < \ldots$
\end{remark}

We denote the variables of a polynomial $f \in F\langle Z \rangle$ by $var(f)$. Let $m \in F\langle Z \rangle$ be a monomial. Let $z \in Z$. We denote the number of occurrences of $z$ in $m$ by $deg_{z} m$.

A polynomial $f(z_{1},\ldots,z_{n}) \in F\langle Z \rangle$ is said to be a polynomial identity for $A$ when $f(a_{1},\ldots,a_{n}) = 0$ for all $a_{1}\in A_{\alpha(z_{1})},\ldots,a_{n}\in A_{\alpha(z_{n})}$. We denote the set of all polynomial identities of $A$ by $T_{\mathbb{Z}_{2}\times \mathbb{Z}_{2}}(A)$. Analogously, we can define the concept of ordinary polynomial identity of $A$.

Due to work of Regev (Lemma 1.2b and Corollaries 1.4 and 1.5, \cite{Regev}), it is known that

\begin{proposition}
\begin{enumerate}
\item The polynomial $t^{p}$ is a polynomial identity for $E^{*}$;
\item The image of the function $f: E \rightarrow E$, with rule $f(t) = t^{p}$, is $span_{F}\{1_{E}\}$. Particularly, if $a \in span_{F}\{1_{E}\}$ and $b \in E^{*}$, then $f(a + b) = a^{p} + b^{p} = a^{p}$;
\item  The polynomial $g(t) = t^{pq} - t^{p}$ is an ordinary polynomial identity for $E$.
\item If $f(t)$ is a polynomial identity of $E$, then $f(t)$ divides $g(t)$.
\end{enumerate}
\end{proposition}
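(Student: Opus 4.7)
The plan is to handle the four parts in order, all through the canonical $\mathbb{Z}_2$-grading $E=E_0\oplus E_1$ and the identity $(a+b)^p = a^p+b^p$, valid in characteristic $p$ whenever $a$ and $b$ commute.

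For (1), I would decompose $t\in E^*$ as $t_0+t_1$ with $t_i\in E_i\cap E^*$. Since $E_0$ is the centre of $E$, the summands commute, so $t^p = t_0^p+t_1^p$. A short direct computation gives $t_1^2=0$: odd homogeneous basis monomials pairwise anticommute and each $e_J$ with $|J|\ge 1$ satisfies $e_J^2=0$, so pairing terms $(J,J')$ with $(J',J)$ in the expansion of $t_1^2$ cancels everything (this uses $p\neq 2$). Hence $t_1^p=0$ for $p\ge 3$. For $t_0$, commutativity of $E_0$ lets me apply the freshman's dream once more, reducing $t_0^p$ to $\sum\alpha_I^p e_I^p$; each $e_I$ with $|I|\ge 2$ again squares to zero, so $e_I^p=0$.

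Parts (2) and (3) then follow immediately. Writing $t\in E$ as $\alpha\cdot 1_E+t'$ with $t'\in E^*$, the two summands commute and (1) gives $t^p = \alpha^p\cdot 1_E$; surjectivity of the map onto $F\cdot 1_E$ uses that $x\mapsto x^p$ is a bijection of $F$. For (3), $t^{pq} = (t^p)^q = (\alpha^p)^q\cdot 1_E = \alpha^p\cdot 1_E = t^p$, since Fermat's little theorem gives $\alpha^q=\alpha$ for every $\alpha\in F$.

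For (4), interpreting the statement as saying that $g(t)$ generates the principal ideal of univariate ordinary identities of $E$ in $F[t]$ (so every such identity $f(t)$ is a multiple of $g(t)$), the core task is to show that $(t-\alpha)^p$ divides $f(t)$ for each $\alpha\in F$. For each $k\in\{0,1,\ldots,p-1\}$, I would substitute $t = \alpha\cdot 1_E + b_1+\cdots+b_k$ into $f$, where $b_j = e_{2j-1}e_{2j}\in E_0$ are pairwise commuting elements squaring to zero. A multinomial expansion then shows that the coefficient of $b_1\cdots b_k$ in $f(t)$ equals $k!\,(D^kf)(\alpha)$, with $D^k$ the $k$-th Hasse derivative. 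Vanishing of this coefficient together with invertibility of $k!$ in $F$ (valid for $k<p$) forces $(D^kf)(\alpha)=0$ for every $0\le k\le p-1$; that is, $\alpha$ is a root of $f$ of multiplicity at least $p$. Letting $\alpha$ range over $F$ yields $\prod_{\alpha\in F}(t-\alpha)^p = (t^q-t)^p = t^{pq}-t^p = g(t)$ as a divisor of $f(t)$, the middle equality by one more use of the freshman's dream.

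The main obstacle is part (4): the bound $k\le p-1$ is forced by needing $k!$ invertible, and this is exactly what produces the exponent $p$ in $(t^q-t)^p$, matching $g(t)$ on the nose. Any attempt to push the argument past $k=p-1$ would run into $k!=0$ and yield no new information, so obtaining the precise generator $g$ (rather than a higher power or extraneous factor) is the sharp step.
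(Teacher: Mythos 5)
Your proof is correct, but note that the paper itself does not prove this proposition at all: it quotes it from Regev's paper (Lemma 1.2b and Corollaries 1.4 and 1.5 of that reference), so your argument is a self-contained reconstruction rather than a parallel of anything in the text. Parts (1)--(3) follow the standard route: the decomposition $t = t_{0} + t_{1}$, the fact that $t_{1}^{2} = 0$ for odd elements, the freshman's dream on the commutative even part, and the behaviour of $x \mapsto x^{p}$ and $x \mapsto x^{q}$ on a finite field --- all fine (though your parenthetical ``this uses $p \neq 2$'' is not needed: the pairing cancellation $e_{J}e_{J'} + e_{J'}e_{J} = 0$ for odd monomials holds identically, in any characteristic). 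The one substantive point is part (4): as literally stated in the paper (``$f(t)$ divides $g(t)$'') the claim is false --- for instance $g(t)^{2}$ is an identity of $E$ that does not divide $g(t)$ --- and you correctly reversed it to the intended statement, namely that $g$ generates the ideal of univariate identities of $E$ in $F[t]$. Your proof of that statement --- substituting $\alpha 1_{E} + e_{1}e_{2} + \cdots + e_{2k-1}e_{2k}$, extracting the coefficient $k!\,(D^{k}f)(\alpha)$ of the basis monomial $e_{1}\cdots e_{2k}$ via Hasse derivatives, and using invertibility of $k!$ for $k \leq p-1$ to force a root of multiplicity at least $p$ at every $\alpha \in F$, then multiplying the pairwise coprime factors $(t-\alpha)^{p}$ to get $(t^{q}-t)^{p} = g(t)$ --- is complete and correct. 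It is also the same kind of evaluation (commuting, square-zero products $e_{2i-1}e_{2i}$) that the paper itself exploits later in Proposition \ref{calculos}, so it fits the paper's toolkit naturally: what the citation buys the author is brevity, while your argument buys self-containedness and makes visible exactly why both the exponent $p$ and the factor $t^{q}-t$ appear in $g$.
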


An endomorphism $\phi: F\langle Z \rangle \rightarrow F\langle Z \rangle$ is said to be a $\mathbb{Z}_{2}\times \mathbb{Z}_{2}$-graded endomorphism when $\phi(F\langle Z \rangle_{g}) \subset F\langle Z \rangle_{g}$ for all $g \in \mathbb{Z}_{2}\times \mathbb{Z}_{2}$. An ideal $I \subset F\langle Z \rangle$ is called a $T_{\mathbb{Z}_{2}\times\mathbb{Z}_{2}}$-ideal when $\phi(I \cap F\langle Z\rangle_{g}) \subset F\langle Z\rangle_{g}$ for all graded endomorphism $\phi$ and for all $g \in \mathbb{Z}_{2} \times \mathbb{Z}_{2}$. The $T_{\mathbb{Z}\times \mathbb{Z}_{2}}$-ideal generated by $S$ is the intersection of all $T_{\mathbb{Z}\times \mathbb{Z}_{2}}$-ideals that contain $S$.
Let $S \subset F\langle Z \rangle$ be a non-empty set. We denote the $T_{\mathbb{Z}\times \mathbb{Z}_{2}}$-ideal generated by $\langle S by \rangle_{T_{\mathbb{Z}\times \mathbb{Z}_{2}}}$. We say that $S \subset F\langle Z \rangle$ is a basis for the graded identities of $A$ when $T_{\mathbb{Z}_{2}\times \mathbb{Z}_{2}}(A) = \langle S \rangle_{T_{\mathbb{Z}\times \mathbb{Z}_{2}}}$.

\section{Preliminaries: Tensor Square Grassmann Algebra}

Let $C = \{e_{i}\otimes 1_{E}, 1_{E}\otimes e_{j} | i = 1,2,\ldots; j = 1,2,\ldots\}$. The canonical basis $\mathcal{B'}$ of $E \otimes E$ is formed by $1_{E}\otimes 1_{E}$, the elements of $C$, and
\begin{center}
$\{e_{i_{1}}.\cdots .e_{i_{n}} \otimes e_{j_{1}}\cdots e_{j_{m}}| i_{1} < \ldots < i_{n}, j_{1} < \ldots < j_{m}, n,m = 1,2,\ldots \}$.
\end{center}

Notice that if $a_{1}\otimes b_{1},\ldots,a_{n}\otimes b_{n} \in \mathcal{B'}$, then $\prod_{i=1}^{n} a_{i}\otimes b_{i} \neq 0$ if and only if $supp(a_{i})\cap supp(a_{j}) = supp(b_{i})\cap supp(b_{j}) = \emptyset$ for all $i,j \in \widehat{n}$ distinct.

\begin{definition}
Let $a \otimes b, a'\otimes b' \in \mathcal{B'}$. Let us define:
\begin{enumerate}
\item $Supp(a\otimes b) = (supp(a) , supp(b))$ (tensor support of $a \otimes b$);
\item $l(a \otimes b) = l(a) + l(b)$ (support length).
\end{enumerate}
We say that $Supp(a \otimes b) \cap Supp (a' \otimes b') = \emptyset$ when $aba'b' \neq 0$.
\end{definition}


\begin{definition}
Let $a_{1}\otimes b_{1},\ldots, a_{n}\otimes b_{n}$ be distinct elements of $\mathcal{B'}$. Let $c = \sum_{i = 1}^{n}\lambda_{i} a_{i}b_{i} (\lambda_{i} \in F - \{0\})$. Let us define
\begin{enumerate}
\item $Supp-union(c) = (\cup_{j=1}^{n}(supp(a_{j})))\bigcup(\cup_{i=1}^{n}(supp(b_{i})))$,
\item $max-l(c) = max \{l(a_{1}) + l(b_{1}), \cdots, l(a_{n}) + l(b_{n}) \}$,
\item $max-ind(c) = \{i \in \widehat{n} | l(a_{i}) + l(b_{i}) = max-l(c)\}$,
\item $g-sum (c) = \sum_{i \in max-ind(c)} \lambda_{i}(a_{i}\otimes b_{i})$.
\end{enumerate}
\end{definition}

\section{Preliminaries: $p$-polynomials}

We now present the definition of $p$-polynomial.

\begin{definition}
A linear combination of monomials
\begin{center}
$f(v_{1},\ldots,v_{n}) = \sum_{j = 1}^{l} \alpha_{j}m_{j}(v_{1},\ldots,v_{n})$
\end{center}
is said to be a graded $p$-polynomial when $deg_{v_{i}} m_{j} \equiv 0 \ \ mod \ p, deg_{v_{i}} m_{l} \ < \ pq$ for all $i \in \widehat{n}$ and $j \in \widehat{l}$.
\end{definition}

In the same way, we can define ordinary $p$-polynomials. According to work of Bekh-Ochir and Rankin (\cite{Ochir}), if $p(t_{1},\ldots,t_{n})$ is a non-zero ordinary $p$-polynomial, there exist $\alpha_{1},\ldots,\alpha_{n} \in F$ such that $p(\alpha_{1}1_{E},\ldots,\alpha_{n}1_{E}) = \beta.1_{E} \neq 0$ (Corollary 3.1,\cite{Ochir}). It is not difficult to see that $p(\alpha_{1}1_{E}\otimes 1_{E},\ldots,\alpha_{n}1_{E}\otimes 1_{E}) = \beta.1_{E} \otimes 1_{E} \neq 0$. This fact implies the following corollary.

\begin{corollary}\label{p-polinomio}
Let $f(v_{1},\ldots,v_{n})$ be a non-zero ($\mathbb{Z}_{2}\times \mathbb{Z}_{2}$-graded) $p$-polynomial. There exists
$(\alpha_{1},\ldots,\alpha_{n}) \in F^{n}$ such that $f(\alpha_{1}1_{E}\otimes 1_{E},\ldots,\alpha_{n}1_{E}\otimes 1_{E}) = \beta 1_{E}\otimes 1_{E} \neq 0$.
\end{corollary}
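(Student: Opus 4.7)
The plan is to reduce the graded assertion to the ordinary (ungraded) one and then invoke the cited result of Bekh-Ochir and Rankin. Since every variable $v_{i}$ has $\mathbb{Z}_{2}\times\mathbb{Z}_{2}$-degree $(0,0)$, the graded $p$-polynomial conditions ($\deg_{v_{i}} m_{j} \equiv 0 \pmod{p}$ and $\deg_{v_{i}} m_{j} < pq$ for each $i,j$) are precisely the conditions defining an ordinary $p$-polynomial. Thus $f(v_{1},\ldots,v_{n})$ may be regarded simply as a non-zero ordinary $p$-polynomial in $v_{1},\ldots,v_{n}$, with no loss of content.

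Next I would apply Corollary 3.1 of \cite{Ochir}, quoted in the paragraph above the statement: there exist $\alpha_{1},\ldots,\alpha_{n} \in F$ and $\beta \in F\setminus \{0\}$ such that
\[
f(\alpha_{1}1_{E},\ldots,\alpha_{n}1_{E}) = \beta\, 1_{E} \neq 0.
\]
These same scalars will be shown to work for the tensor square.

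To transfer the evaluation to $E\otimes E$, I would use that $1_{E}\otimes 1_{E}$ is the unit of $E\otimes E$ and the map $\alpha \mapsto \alpha(1_{E}\otimes 1_{E})$ is an $F$-algebra embedding. For each monomial $m_{j} = v_{i_{1}}\cdots v_{i_{r}}$ appearing in $f$, the specialization $v_{i}\mapsto \alpha_{i}(1_{E}\otimes 1_{E})$ yields $\alpha_{i_{1}}\cdots\alpha_{i_{r}}(1_{E}\otimes 1_{E})$, which is exactly the image under that embedding of the corresponding specialization $\alpha_{i_{1}}\cdots\alpha_{i_{r}} 1_{E}$ obtained in $E$. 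Summing over $j$ with the coefficients of $f$ gives
\[
f(\alpha_{1}1_{E}\otimes 1_{E},\ldots,\alpha_{n}1_{E}\otimes 1_{E}) = \beta (1_{E}\otimes 1_{E}) \neq 0,
\]
as required.

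There is no genuine obstacle: once the graded hypothesis is recognized as formally equivalent to the ungraded one (because all the $v_{i}$ sit in a single homogeneous component), the corollary is a direct consequence of the Bekh-Ochir--Rankin theorem combined with the elementary remark that evaluations on the central element $1_{E}\otimes 1_{E}$ factor through evaluations on $1_{E}$.
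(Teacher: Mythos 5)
Your proof is correct and takes essentially the same approach as the paper: recognize that a graded $p$-polynomial in the degree-$(0,0)$ variables is just an ordinary $p$-polynomial, invoke Corollary 3.1 of Bekh-Ochir--Rankin to get $f(\alpha_{1}1_{E},\ldots,\alpha_{n}1_{E})=\beta 1_{E}\neq 0$, and transfer the evaluation to $E\otimes E$ through the unit $1_{E}\otimes 1_{E}$. The paper compresses the last step into ``it is not difficult to see''; your explicit factoring of both evaluations through the embedding $\alpha\mapsto\alpha(1_{E}\otimes 1_{E})$ is precisely the content of that remark.
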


\section{Preliminaries: Commutators}

Let $A = A_{0} \oplus A_{1}$ be a $\mathbb{Z}_{2}$-graded algebra. The tensor algebra $A \otimes E$ has a $\mathbb{Z}_{2}$-grading and a natural $\mathbb{Z}_{2}\times \mathbb{Z}_{2}$-grading defined as follows:
\begin{enumerate}
\item $(A \otimes E)_{0} = (A_{0}\otimes E_{0})\oplus(A_{1} \otimes E_{0})$, $(A \otimes E)_{1} = (A_{0}\otimes E_{1})\oplus(A_{1} \otimes E_{1})$.
\item $(A \otimes E)_{(0,0)} = (A_{0}\otimes E_{0})$, $(A \otimes E)_{(1,0)} =  (A_{1}\otimes E_{0})$, $(A \otimes E)_{(0,1)} = (A_{0}\otimes E_{1})$, $(A \otimes E)_{(1,1)} = (A_{1}\otimes E_{1})$.
\end{enumerate}

\begin{definition}\label{commutator}
Let $A = A_{0} \oplus A_{1}$ be a $\mathbb{Z}_{2}$-graded algebra. Let $a_{1},a_{2},\ldots,a_{n} \in A_{0}\cup A_{1}$. Let $b = b_{0} + b_{1}, b_{0} \in A_{0}, b_{1} \in A_{1}$. Let $c = c_{0} + c_{1}, c_{0} \in A_{0}, c_{1} \in A_{1}$. Let us define:
\begin{enumerate}
\item $[a_{1},a_{2}]_{\mathbb{Z}_{2}} := a_{1}a_{2} - (-1)^{\alpha(a_{1})\alpha(a_{2})}a_{2}a_{1}$;
\item $[c,d]_{\mathbb{Z}_{2}} := [b_{0},c_{0}]_{\mathbb{Z}_{2}} + [b_{0},c_{1}]_{\mathbb{Z}_{2}} + [b_{1},c_{0}]_{\mathbb{Z}_{2}} + [b_{1},c_{1}]_{\mathbb{Z}_{2}}$;
\item $[a_{1},\ldots,a_{n}]_{\mathbb{Z}_{2}} := [[a_{1},\ldots,a_{n-1}]_{\mathbb{Z}_{2}},a_{n}]_{\mathbb{Z}_{2}}$.
\end{enumerate}
\end{definition}

At light of Definition \ref{commutator}, we consider the commutator $[,]_{\mathbb{Z}_{2}}$ of elements of $A \otimes E$ or $F \langle Z \rangle$ viewing these $\mathbb{Z}_{2}\times \mathbb{Z}_{2}$-graded algebras as $\mathbb{Z}_{2}$-graded algebras.

\begin{remark}
Let $a_{1}\otimes b_{1},a_{2}\otimes b_{2} \in (\mathcal{B'})\cap (\cup_{(i,j)\in \mathbb{Z}_{2}\times \mathbb{Z}_{2}} (E_{k^{*}}\otimes E)_{(i,j)})$. We have

\begin{center}
$[a_{1}\otimes b_{1},a_{2}\otimes b_{2}]_{\mathbb{Z}_{2}} = [a_{1},a_{2}]\otimes b_{1}b_{2}$.
\end{center}
\end{remark}

By direct calculations, it is possible to verify that $[z_{1},z_{2},z_{3}]_{\mathbb{Z}_{2}}, z_{1},z_{2},z_{3} \in Z$ belongs to $T_{\mathbb{Z}_{2}\times \mathbb{Z}_{2}}(E_{k^{*}}\otimes E)$.

\begin{definition}
We denote the $T_{\mathbb{Z}_{2} \times \mathbb{Z}_{2}}$-ideal generated by the following type of identities
\begin{center}
$[z_{1},z_{2},z_{3}]_{\mathbb{Z}_{2}}, z_{1},z_{2},z_{3} \in Z$.
\end{center}
by $\mathcal{J}$.
\end{definition}

The next proposition is a result proved in \cite{Centrone-1}.

\begin{proposition}[Proposition 8, \cite{Centrone-1}]\label{proposicao-lucio}
Any $\mathbb{Z}_{2} \times \mathbb{Z}_{2}$-grading polynomial in $F \langle Z \rangle$ can be written, modulo $\mathcal{J}$, as a linear combination of polynomials of the type
\begin{center}
$z_{i_{1}}.\cdots.z_{i_{r}}[z_{j_{1}},z_{j_{2}}]_{\mathbb{Z}_{2}}.\cdots.[z_{j_{t-1}},z_{j_{t}}]_{\mathbb{Z}_{2}}$
\end{center}
with $z_{l} \in Z, i_{1} \leq i_{2} \leq \cdots \leq i_{r}$ and $j_{1} < \ldots < j_{t}$.
\end{proposition}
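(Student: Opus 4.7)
The plan is to prove the statement by induction on the total degree $n$ of a monomial $m \in F\langle Z \rangle$, at each stage rewriting $m$ modulo $\mathcal{J}$ in the required normal form. The key technical observation, immediate from the definition of $\mathcal{J}$, is a \emph{supercentrality} statement: for every $z_k \in Z$, the containment $[[z_i,z_j]_{\mathbb{Z}_2},z_k]_{\mathbb{Z}_2} \in \mathcal{J}$ gives
\[ [z_i,z_j]_{\mathbb{Z}_2}\, z_k \equiv (-1)^{(\alpha(z_i)+\alpha(z_j))\alpha(z_k)} z_k\, [z_i,z_j]_{\mathbb{Z}_2} \pmod{\mathcal{J}}. \]
Applying the super-Leibniz rule $[a,bc]_{\mathbb{Z}_2} = [a,b]_{\mathbb{Z}_2}c + (-1)^{\alpha(a)\alpha(b)} b[a,c]_{\mathbb{Z}_2}$, one obtains as a corollary that any two $2$-commutators supercommute modulo $\mathcal{J}$, and, more generally, that a product of $2$-commutators supercommutes (modulo $\mathcal{J}$) with any element of $F\langle Z\rangle$.

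The base cases $n\leq 2$ follow immediately from $z_a z_b = (-1)^{\alpha(z_a)\alpha(z_b)}z_bz_a + [z_a,z_b]_{\mathbb{Z}_2}$ together with the skew-supersymmetry $[z_a,z_b]_{\mathbb{Z}_2} = -(-1)^{\alpha(z_a)\alpha(z_b)}[z_b,z_a]_{\mathbb{Z}_2}$. For the inductive step, write $m = m' z_k$ with $\deg m' = n-1$ and use the induction hypothesis to express $m' \equiv \sum_\beta \lambda_\beta\, z_{i_1^{(\beta)}}\cdots z_{i_{r_\beta}^{(\beta)}}\, C^{(\beta)} \pmod{\mathcal{J}}$, where each $C^{(\beta)}$ is a sorted product of $2$-commutators. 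By supercentrality $C^{(\beta)}z_k \equiv \pm\, z_k C^{(\beta)} \pmod{\mathcal{J}}$, so it only remains to sort the sequence $z_{i_1^{(\beta)}},\ldots,z_{i_{r_\beta}^{(\beta)}},z_k$ and to absorb any newly produced $2$-commutator into $C^{(\beta)}$. This is accomplished by applying the swap identity at the last out-of-order adjacent pair, pushing the new commutator all the way to the right via supercentrality, and iterating.

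It remains to rearrange the bracket block into the form $[z_{j_1},z_{j_2}]_{\mathbb{Z}_2}\cdots[z_{j_{t-1}},z_{j_t}]_{\mathbb{Z}_2}$ with $j_1<\cdots<j_t$. Skew-supersymmetry applied inside each bracket forces $j_{2s-1}<j_{2s}$, and supercommutation of any two $2$-commutators modulo $\mathcal{J}$ permits adjacent transpositions of brackets, so a full reordering is achievable. The only genuine subtlety is termination of the procedure: the lexicographic measure given by (number of variable inversions outside brackets, number of adjacent bracket inversions) strictly decreases under every rewriting move, and every non-normal monomial admits such a move. The sign bookkeeping imposed by the $\mathbb{Z}_2$-grading is mechanical once the supercentrality lemma is in hand, and represents the only step at which serious care is required.
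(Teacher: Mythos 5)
The paper itself does not prove this proposition at all --- it imports it verbatim as Proposition 8 of \cite{Centrone-1} --- so your argument has to stand on its own, and it contains a genuine gap in the last step. Your supercentrality lemma, the super-Leibniz rule, the base case, and the inductive sorting of the unbracketed prefix are all correct. The problem is the claim that skew-supersymmetry inside each bracket plus supercommutation of whole brackets ``permits a full reordering'' into $[z_{j_1},z_{j_2}]_{\mathbb{Z}_2}\cdots[z_{j_{t-1}},z_{j_t}]_{\mathbb{Z}_2}$ with $j_1<\cdots<j_t$. That condition orders the entries \emph{globally, across different brackets}, and neither of your two moves can ever exchange a variable sitting in one bracket with a variable sitting in another. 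Concretely, $[z_1,z_4]_{\mathbb{Z}_2}[z_2,z_3]_{\mathbb{Z}_2}$ is sorted inside each bracket, and both orderings of the two brackets fail the condition $j_1<j_2<j_3<j_4$; so your assertion that ``every non-normal monomial admits such a move'' is false, and the rewriting procedure halts at a word not of the required type. The missing ingredient is the inter-bracket exchange identity: for homogeneous $a,b,c,d$, with $|\cdot|$ denoting $\mathbb{Z}_2$-degree,
$$[a,c]_{\mathbb{Z}_2}[b,d]_{\mathbb{Z}_2} \equiv -(-1)^{|b||c|+|b||d|+|c||d|}\,[a,d]_{\mathbb{Z}_2}[b,c]_{\mathbb{Z}_2} \pmod{\mathcal{J}},$$
which you can obtain by expanding $[a,b(cd)]_{\mathbb{Z}_2}$ and $[a,(bc)d]_{\mathbb{Z}_2}$ via the super-Leibniz rule together with supercentrality and comparing the two results. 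Only with this move (combined with the ones you have) can the $2t$ bracket entries be permuted arbitrarily, up to sign, and hence sorted globally.

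A second, related omission follows immediately: once entries are exchanged between brackets, repeated variables arise, and $[z,z]_{\mathbb{Z}_2}$ is \emph{not} zero when $z$ is odd --- it equals $2z^2$. For instance $[z_1,z_3]_{\mathbb{Z}_2}[z_1,z_3]_{\mathbb{Z}_2}$ (a perfectly legitimate input, to which the proposition applies) can only be brought to normal form by exchanging entries to get $\pm[z_1,z_1]_{\mathbb{Z}_2}[z_3,z_3]_{\mathbb{Z}_2}=\pm 4z_1^2z_3^2$, and then absorbing these squares --- which are supercentral modulo $\mathcal{J}$, being scalar multiples of brackets --- into the ordered prefix, where repetitions $i_1\leq\cdots\leq i_r$ are allowed. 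Your procedure never confronts this case precisely because it lacks the exchange move, but any complete proof must treat it, since the target normal form demands strictly increasing (hence pairwise distinct) bracket entries. With the exchange identity added, the repeated-entry reduction spelled out, and the termination measure adjusted to account for the new move, your outline does become a correct, self-contained proof of the cited result.
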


It is well known that $[t_{1}^{p},t_{2}] = - [t_{2},t_{1},\ldots,t_{1}]$ ($t_{1}$ appears $p$ times in the last commutator) is a consequence of $[t_{1},t_{2},t_{3}]$. So $[v_{1}^{p},v_{2}]_{\mathbb{Z}_{2}}$ belongs to $\mathcal{J}$. Consequently, we have the following result.

\begin{proposition}\label{0,0}
Let $m = v_{1}^{a_{1}}.\cdots.v_{n}^{a_{n}}$ be a monomial. Modulo $\mathcal{J}$,
\begin{center}
$m \equiv fv_{1}^{b_{1}}.\cdots.v_{n}^{b_{n}}$,
\end{center}
where $f$ is a monomial $p$-polynomial and $0 \leq a_{1},\ldots,a_{n} \leq p-1$.
\end{proposition}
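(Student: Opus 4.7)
My plan is to apply Euclidean division to each exponent and then exploit the centrality of $v_i^p$ modulo $\mathcal{J}$ to separate the ``$p$-part'' from the residue of degree less than $p$ in each variable. For each $i \in \widehat{n}$ I would write $a_i = p c_i + b_i$ with $c_i \geq 0$ and $0 \leq b_i \leq p-1$. This gives the interleaved factorization
\[
m = (v_1^p)^{c_1} v_1^{b_1} (v_2^p)^{c_2} v_2^{b_2} \cdots (v_n^p)^{c_n} v_n^{b_n}.
\]

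Next, I would invoke the observation recorded immediately before the proposition: $[v_i^p, v_j]_{\mathbb{Z}_2} \in \mathcal{J}$ for every $i,j$, since this commutator equals (up to sign) the long commutator $[v_j, v_i, \ldots, v_i]_{\mathbb{Z}_2}$ with $p$ copies of $v_i$, which belongs to $\mathcal{J}$ by definition. Hence each block $(v_i^p)^{c_i}$ is central modulo $\mathcal{J}$ and may be slid past every factor $v_j^{b_j}$ without changing the equivalence class. By iterating these swaps and collecting all of the $p$-power blocks on the left I obtain
\[
m \equiv v_1^{p c_1} v_2^{p c_2} \cdots v_n^{p c_n} \cdot v_1^{b_1} v_2^{b_2} \cdots v_n^{b_n} \pmod{\mathcal{J}}.
\]
Setting $f := v_1^{p c_1} \cdots v_n^{p c_n}$, every variable in $f$ appears with a multiplicity divisible by $p$, so $f$ is a monomial $p$-polynomial, and every remaining exponent $b_i$ lies in $\{0,1,\ldots,p-1\}$, as required.

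I do not foresee a substantive obstacle: the proof is a bookkeeping argument resting on two ingredients already available, namely the Euclidean division of exponents and the centrality of $v_i^p$ modulo $\mathcal{J}$. The only minor point to check, if the definition of monomial $p$-polynomial is understood to include the bound $\deg_{v_i} f < pq$, is the further reduction by the ordinary identity $v^{pq} = v^p$ from item (3) of the Proposition; this is performed by iteratively replacing any exponent $pc_i \geq pq$ in $f$ with $pc_i - p(q-1)$ until $c_i < q$, and does not interact with the commutator manipulations above.
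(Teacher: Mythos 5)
Your proof is correct and is essentially the paper's own argument: the paper deduces the proposition immediately from the observation recorded just before its statement, namely that $[v_{1}^{p},v_{2}]_{\mathbb{Z}_{2}}\in\mathcal{J}$, so that $p$-th powers of the $v_{i}$ are central modulo $\mathcal{J}$ and can be slid to the left and collected exactly as you do. One caution about your closing remark: $v_{1}^{pq}-v_{1}^{p}$ belongs to $\mathcal{I}$ but not to $\mathcal{J}$, so that final reduction of exponents below $pq$ is not available modulo $\mathcal{J}$ alone; this wrinkle, created by the bound $deg_{v_{i}}m_{l}<pq$ in the paper's definition of $p$-polynomial, is one the paper itself glosses over, and it is harmless at the point where the proposition is actually applied (Corollary \ref{se}), which works modulo $\mathcal{I}$.
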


\begin{definition}\label{definicao}
Let $u = h_{1}.h_{2}.h_{3}.h_{4}[z_{j_{1}},z_{j_{2}}]_{\mathbb{Z}_{2}}.\cdots.[z_{j_{t-1}},z_{j_{t}}]_{\mathbb{Z}_{2}}$ as in the statement
of Proposition \ref{proposicao-lucio}, where:
\begin{description}
\item $h_{1} = v_{i_{1}}^{a_{i_{1}}}.\cdots.v_{i_{n_{1}}}^{a_{i_{n_{1}}}}$, $h_{2} =  w_{j_{1}}^{b_{j_{1}}}.\cdots.w_{j_{n_{2}}}^{b_{j_{n_{2}}}}$,
\item $h_{3} = x_{l_{1}}^{c_{l_{1}}}.\cdots.x_{l_{n_{3}}}^{c_{l_{n_{3}}}}$, $h_{4} = y_{r_{1}}^{d_{r_{1}}}.\cdots.y_{r_{n_{4}}}^{d_{r_{n_{4}}}}$.
\end{description}
Let us define:
\begin{enumerate}
\item For $z \in Z$, $deg_{z} u:=$ the number of occurrences of $z$ in $u$,
\item $deg (u):= \sum_{z \in z} deg_{z} u$,
\item $beg(u):= h_{1}h_{2}h_{3}h_{4}$,
\item $beg_{VW}(u) := h_{1}h_{2}$,
\item $beg_{XY}(u) := h_{3}h_{4}$,
\item $exp_{XY}(u) := (c_{l_{1}},\ldots,c_{l_{n_{3}}}, d_{r_{1}},\ldots ,d_{r_{n_{4}}})$,
\item $end(u):= z_{j_{1}}z_{j_{2}}.\cdots.z_{j_{t-1}}z_{j_{t}}$.
\end{enumerate}
\end{definition}

\section{$\mathbb{Z}_{2}\times \mathbb{Z}_{2}$-graded identities of $E_{k^{*}} \otimes E$}

In (Definition 18, \cite{Centrone-1}), it is shown that for a field of positive characteristic, the following polynomials are $\mathbb{Z}_{2}\times \mathbb{Z}_{2}$-graded identities for $E_{k^{*}}\otimes E$.

\begin{definition}\label{identities-lucio-viviane}
The following polynomials are identities of $E_{k^{*}}\otimes E$
\begin{enumerate}
\item $(w_{1})^{p}$,
\item $(x_{1})^{p}z_{2}w_{3}$,
\item $(y_{1})^{p}z_{2}y_{3}$,
\item $(x_{1})^{p-1}[x_{1},z_{2}]_{\mathbb{Z}_{2}}$,
\item $(y_{1})^{p-1}[y_{1},z_{2}]_{\mathbb{Z}_{2}}$,
\item $[z_{1},z_{2},z_{3}]_{\mathbb{Z}_{2}}$,
\item $z_{1}z_{2}.\cdots.z_{k+1}, \ z_{1},\ldots,z_{k+1} \in W\cup Y$.
\end{enumerate}
\end{definition}
\begin{remark}
Notice that
\begin{enumerate}
 \item $(x_{1})^{p + 1} \in \langle (x_{1})_{1}^{p-1}[x_{1},z_{2}]_{\mathbb{Z}_{2}}\rangle_{\mathbb{Z}_{2}\times \mathbb{Z}_{2}} $,
 \item $ (y_{1})^{p + 1} \in \langle (y_{1})^{p-1}[y_{1},z_{2}]_{\mathbb{Z}_{2}} \rangle_{\mathbb{Z}_{2}\times \mathbb{Z}_{2}}$.
\end{enumerate}
\end{remark}

\begin{definition}
We denote the $T_{\mathbb{Z}_{2}\times \mathbb{Z}_{2}}$-ideal generated by the seven type of identities in Definition \ref{identities-lucio-viviane} by $\mathcal{H}$. We denote the $T_{\mathbb{Z}_{2}\times \mathbb{Z}_{2}}$ generated by the identity the seven identities in Definition \ref{identities-lucio-viviane} and the identity $v_{1}^{pq} - v_{1}^{p}$ by $\mathcal{I}$.
\end{definition}

\begin{definition}\label{hs}
Let $f$ be a polynomial as in the statement of Proposition \ref{proposicao-lucio}. The polynomial $f$ is said be of type $SE$ when there exist polynomials $h_{1},h_{2},h_{3},h_{4},h_{5}$ such that $f =  h_{1}h_{2}h_{3}h_{4}h_{5}$, where:

\begin{enumerate}
\item $h_{1} = v_{i_{1}}^{a_{i_{1}}}.\cdots.v_{i_{n_{1}}}^{a_{i_{n_{1}}}}, i_{1} < \ldots < i_{n_{1}}, 0 \leq a_{i_{1}},\ldots,a_{i_{n_{1}}} \leq p - 1$,
\item $h_{2} =  w_{j_{1}}^{b_{j_{1}}}.\cdots.w_{j_{n_{2}}}^{b_{j_{n_{2}}}}, j_{1} < \ldots < j_{n_{2}}, 0 \leq b_{j_{1}},\ldots,b_{j_{n_{2}}} \leq p - 1$,
\item $h_{3} = x_{l_{1}}^{c_{l_{1}}}.\cdots.x_{l_{n_{3}}}^{c_{l_{n_{3}}}}, l_{1} < \ldots < l_{n_{3}}, 0 \leq c_{l_{1}},\ldots,c_{l_{n_{3}}} \leq p$,
\item $h_{4} = y_{r_{1}}^{d_{r_{1}}}.\cdots.y_{r_{n_{4}}}^{d_{r_{n_{4}}}}, r_{1} < \ldots < r_{n_{4}}, 0 \leq d_{r_{1}},\ldots,d_{r_{n_{4}}} \leq p $.
\item $h_{5} = [z_{1},z_{2}][z_{3},z_{4}].\cdots.[z_{2n-1},z_{2n}],$ $n \geq 0$, $z_{1} < \ldots < z_{2n} (\mbox{if} \ \  n \geq 1)$.
\end{enumerate}
Furthermore, we say that $f$ is type $SSE$ when it is type $SE$ and the following three additional conditions hold
\begin{description}
\item $deg h_{2} + deg h_{4} \leq k$,
\item if $deg_{x_{i}} h_{3} = p$, then $deg_{x_{i}} end(h_{5}) = 0$,
\item if $deg_{y_{i}} h_{4} = p$, then $deg_{y_{i}} end(h_{5}) = 0$.
\end{description}
\end{definition}

Bearing in mind the Propositions \ref{proposicao-lucio}, \ref{0,0} and the Definition \ref{identities-lucio-viviane},
we have the following corollary.

\begin{corollary}\label{se}
Let $f \in F\langle Z \rangle$. Modulo $\mathcal{I}$, $f$ can be written as

\begin{center}
$f = f_{0} + \sum_{i = 1}^{n} f_{i}u_{i}$,
\end{center}
where $f_{0},f_{1},\ldots,f_{n}$ are $p$-polynomials and $u_{1},\ldots,u_{n} \in SS$. These elements of $SE$ are pairwise distinct
\end{corollary}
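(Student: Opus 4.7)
The plan is to normalize each monomial of $f$ modulo $\mathcal{I}$ into an $SE$-tail left-multiplied by a $V$-variable $p$-polynomial, and then to gather terms with identical tails. By linearity it suffices to work monomial by monomial.

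First apply Proposition \ref{proposicao-lucio} modulo $\mathcal{J} \subset \mathcal{I}$ to reduce any monomial to the form
$$g = z_{i_1} \cdots z_{i_r} [z_{j_1}, z_{j_2}]_{\mathbb{Z}_2} \cdots [z_{j_{t-1}}, z_{j_t}]_{\mathbb{Z}_2}$$
with $i_1 \le \cdots \le i_r$ and $j_1 < \cdots < j_t$. Since the chosen order on $Z$ places the whole $V$-block below $W$, below $X$, below $Y$, the ordered prefix factors uniquely as $h_1 h_2 h_3 h_4$ with $h_s$ an ordered monomial in the corresponding block, while the commutator suffix serves as $h_5$ in Definition \ref{hs}.

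Next, reduce exponents within each block using $\mathcal{H} \subset \mathcal{I}$. Identity (1) of Definition \ref{identities-lucio-viviane} forces every $W$-exponent in $h_2$ to lie in $[0, p-1]$, and the Remark after that definition supplies $x_\ell^{p+1}, y_r^{p+1} \in \mathcal{H}$, bounding the $X$- and $Y$-exponents in $h_3, h_4$ by $p$. For $h_1$, Proposition \ref{0,0} delivers a factorization $h_1 \equiv f' v_1^{b_1} \cdots v_n^{b_n} \pmod{\mathcal{J}}$ with $f'$ a monomial $p$-polynomial (in $V$-variables only) and $0 \le b_s \le p-1$; substituting into $g$ and regrouping associatively yields $g \equiv f' u$ where $u := v_1^{b_1} \cdots v_n^{b_n} h_2 h_3 h_4 h_5$ is of type $SE$ by inspection of Definition \ref{hs}. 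No rearrangement of $f'$ is required because it is already sitting on the left. Finally, apply the relation $v_i^{pq} \equiv v_i^p$ carried by $\mathcal{I}$ to reduce each exponent appearing in $f'$ into the range $[0, pq)$ demanded by the definition of a $p$-polynomial.

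Performing this procedure on every monomial of $f$ and then collecting summands that share a common $SE$-tail produces the target expression $f \equiv f_0 + \sum_i f_i u_i \pmod{\mathcal{I}}$ with pairwise distinct $u_i$, each $f_i$ being a sum of monomial $p$-polynomials (hence a $p$-polynomial, after a further $v_i^{pq}\equiv v_i^p$ reduction if two grouped summands happen to exceed the range) and $f_0$ absorbing the contributions whose tail is the empty word. Nothing here is genuinely subtle: the one place that required real work, namely the centrality $[v_i^p, z]_{\mathbb{Z}_2} \in \mathcal{J}$ underlying Proposition \ref{0,0}, has already been recorded in the excerpt, so the remainder of the argument is pure bookkeeping along the block structure of Definition \ref{hs}.
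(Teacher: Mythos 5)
Your proposal is correct and follows essentially the same route as the paper, which states Corollary \ref{se} as an immediate consequence of Proposition \ref{proposicao-lucio} (the ordered monomial-times-commutators normal form modulo $\mathcal{J}$), Proposition \ref{0,0} (extraction of a monomial $p$-polynomial from the $V$-block), and the exponent bounds coming from Definition \ref{identities-lucio-viviane}. Your write-up merely makes explicit the bookkeeping the paper leaves implicit, including the one point the paper glosses over: that the identity $v_{1}^{pq}-v_{1}^{p}$ in $\mathcal{I}$ is what guarantees the exponents of the $p$-polynomial coefficients can be brought below $pq$.
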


In (Proposition 20, \cite{Centrone-1}), the authors asserted that each element of type $SE$ (modulo $\mathcal{H}$) is a linear combination of elements of type $SSE$.

The next lemma is a refinement of Corollary \ref{se} and a consequence of (Proposition 20, \cite{Centrone-1}).

\begin{corollary}\label{sse}
Let $f \in F\langle Z \rangle$. Modulo $\mathcal{I}$, $f$ can be written as

\begin{center}
$f = f_{0} + \sum_{i = 1}^{n} f_{i}u_{i}$,
\end{center}
where $f_{0},f_{1},\ldots,f_{n}$ are $p$-polynomials and $u_{1},\ldots,u_{n} \in SSE$. These elements of $SSE$ are pairwise distinct.
\end{corollary}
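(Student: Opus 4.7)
The plan is to derive Corollary \ref{sse} by feeding the output of Corollary \ref{se} through Proposition 20 of \cite{Centrone-1} and then tidying up the resulting expression by collecting like terms. This is the strategy hinted at by the preamble of the corollary, so the essential content is bookkeeping on top of two already-established facts.

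First, I would apply Corollary \ref{se} directly to $f$: this gives, modulo $\mathcal{I}$, an expression
\begin{equation*}
f \;\equiv\; f_{0} + \sum_{i=1}^{n} f_{i}u_{i},
\end{equation*}
where each $f_{i}$ is a $p$-polynomial and the $u_{i}$ are pairwise distinct elements of $SE$. Next, I would invoke Proposition 20 of \cite{Centrone-1}, which (working modulo $\mathcal{H}$) expresses each $u_{i}$ as a linear combination $u_{i} \equiv \sum_{j=1}^{k_{i}} \lambda_{i,j} w_{i,j}$ with $\lambda_{i,j}\in F$ and $w_{i,j} \in SSE$. Since $\mathcal{H}\subset \mathcal{I}$ (every generator of $\mathcal{H}$, including the defining commutator identity of $\mathcal{J}$, lies in $\mathcal{I}$), substituting these expressions into the previous display is legitimate modulo $\mathcal{I}$ and yields
\begin{equation*}
f \;\equiv\; f_{0} + \sum_{i=1}^{n}\sum_{j=1}^{k_{i}} \bigl(\lambda_{i,j}f_{i}\bigr)\, w_{i,j}\pmod{\mathcal{I}}.
\end{equation*}

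Each coefficient $\lambda_{i,j}f_{i}$ is an $F$-scalar multiple of a $p$-polynomial, hence still a $p$-polynomial (the two conditions defining a $p$-polynomial, namely $\deg_{v}m_{j}\equiv 0\pmod{p}$ and $\deg_{v}m_{l}<pq$, are preserved by scaling). To reach the stated form, I would then collect terms: for each $SSE$ element $w$ appearing in the double sum, add together all the coefficients $\lambda_{i,j}f_{i}$ with $w_{i,j}=w$. Because $F$-linear combinations of $p$-polynomials are again $p$-polynomials, the aggregate coefficient $g_{w}$ is a $p$-polynomial. After discarding those $w$ whose aggregate coefficient is zero, the remaining $SSE$ elements are pairwise distinct by construction, which is the required normal form.

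There is essentially no serious obstacle in this argument; the entire content is the realization that (i) Proposition 20 of \cite{Centrone-1} can be applied to each $u_{i}$ separately, and (ii) the class of $p$-polynomial coefficients is closed under $F$-linear combinations so that the collection step preserves the asserted structure. The one point that deserves explicit mention in the write-up is the inclusion $\mathcal{H}\subset \mathcal{I}$, which justifies promoting a congruence modulo $\mathcal{H}$ to a congruence modulo $\mathcal{I}$ at the substitution step.
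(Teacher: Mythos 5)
Your proof is correct and takes essentially the same route as the paper: the paper presents Corollary \ref{sse} precisely as a consequence of Corollary \ref{se} combined with Proposition 20 of \cite{Centrone-1}, leaving implicit the bookkeeping you spell out (the inclusion $\mathcal{H}\subset\mathcal{I}$ justifying the substitution, and the fact that collecting terms preserves $p$-polynomial coefficients and pairwise distinctness).
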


\section{$SSE$ order}

We now introduced a variation of $SS$ Total Order defined in \cite{Fonseca} (see Definition 2.3 ). The symbol $<_{lex-lef}$ denotes the usual left lexicographical order.

\begin{definition}
Let $m \in \mathbb{N}$. Let $\mathcal{A} = (a_{1},\ldots,a_{m}) \in (\mathbb{Z}_{\geq 0})^{m}$. Let
\begin{center}
$ext(\mathcal{A}) = (b_{1},\ldots,b_{m},b_{m+1},b_{m+2},b_{m+3},\ldots,b_{m+n},\ldots)$,
\end{center}
where $b_{i} = a_{i}$ for $i \in \widehat{m},$ and $b_{i} = 0$ for $i \geq m + 1$.
\end{definition}

\begin{definition}\label{ext}
Let $m_{1},m_{2} \in \mathbb{N}$. Let
\begin{center}
$\mathcal{A} = (a_{1},\ldots,a_{m_{1}}) \in (\mathbb{Z}_{\geq 0})^{m_{1}},\mathcal{A'} = (a_{1}',\ldots,a_{m_{2}}') \in (\mathbb{Z}_{\geq 0})^{m_{2}}$ ; \\

$ext(\mathcal{A}) = (b_{1},\ldots,b_{m_{1}},\ldots), ext(\mathcal{A'}) = (b_{1}',\ldots,b_{m_{2}}',\ldots)$.
\end{center}
We say
that $ext(\mathcal{A}) \equiv_{(0,1)} ext(\mathcal{A'})$ when $(|b_{1}-b_{1}'|,\ldots,|b_{n} - b_{n}'|,\ldots) \in (\{0,1\})^{\infty}$.
\end{definition}

\begin{definition}
Let $ext(\mathcal{A}),ext(\mathcal{A'})$ be as in the statement of Definition \ref{ext}. Suppose that $ext(\mathcal{A}) \equiv_{(0,1)} ext(\mathcal{A'})$. We say that $ext(\mathcal{A}) <_{\equiv_{(0,1)}} ext(\mathcal{A'})$ when $a_{1}, |a_{1} - a_{1}'|$ are odd or there exists an integer $i > 1$ such that $a_{1} \equiv a_{1}',\ldots,a_{i-1} \equiv a_{i-1}' \ \ mod \ 2$, $a_{i}, |a_{i}-a_{i}'|$ are odd.
\end{definition}


\begin{definition}[SSE Order]
Let
\begin{center}
$m_{1} = z_{i_{1}}.\cdots.z_{i_{n_{1}}}[z_{j_{1}},z_{j_{2}}].\cdots.[z_{j_{2n_{2} - 1}},z_{j_{2n_{2}}}]$
\end{center}
and
\begin{center}
$m_{2} = z_{k_{1}}.\cdots.z_{k_{n_{3}}}[z_{l_{1}},z_{l_{2}}].\cdots.[z_{l_{2n_{4} - 1}},z_{l_{2n_{4}}}]$,
\end{center}
as in the statement of Definition \ref{hs}.
We say that $m_{1} < m_{2}$ when
\begin{enumerate}
\item If $deg m_{1} < deg m_{2}$;
\item If $deg m_{1} = deg m_{2},$ and $beg_{VW}(m_{1}) <_{lex-lef} beg_{VW}$;
\item If $deg m_{1} = deg m_{2},$ $beg_{VW}(m_{1}) = beg_{VW}(m_{2})$, $ext(exp_{XY}(m_{1})) \not\equiv_{(0,1)} ext(exp_{XY})(m_{2})$ and $beg_{XY}(m_{1}) <_{lex} beg_{XY}$,
\item If $deg m_{1} = deg m_{2},$ $beg_{VW}(m_{1}) = beg_{VW}(m_{2})$, $ext(exp_{XY}(m_{1})) \equiv_{(0,1)} ext(exp_{XY})(m_{2})$, $beg_{XY}(m_{1}) \neq beg_{XY}$ and \newline $ext(exp_{XY}(m_{1})) <_{\equiv_{(0,1)}} ext(exp_{XY}(m_{2}))$;
\item If $deg m_{1} = deg m_{2},$ $beg(m_{1}) = beg(m_{2})$ and $end(m_{1}) <_{lex-lef} end(m_{2})$.
\end{enumerate}
\end{definition}

\begin{definition}
Let $f = f_{0} + \sum_{i = 1}^{n} f_{i}u_{i}$ as in the Corollary \ref{sse}. The leading term of $f$ (which is denoted $LT(f)$) is the greatest element of the set $\{u_{1},\ldots,u_{n}\}$ (with respect the SSE order).
\end{definition}

\section{Some calculations}

We present some calculation on $E_{k^{*}}\otimes E$ in this section.
These calculations were done by \cite{Centrone-1} in other context (see Lemmas 13 and 17).

\begin{definition}
Let $z \in Z$. We denote an evaluation of $z$ in $(E_{k^{*}}\otimes E)_{\alpha(z)}$ by $\overline{z}$.
\end{definition}


\begin{proposition}\label{calculos}
Let $j,n,t \in \mathbb{Z}_{\geq 0}$. Let $\overline{z} = c\otimes d \in \mathcal{B'}$ such that $l(c)$ is odd.

\vspace{0.1cm}

Let

\vspace{0.1cm}

\begin{flushleft}
Type 1: $\overline{v_{1}} = \sum_{i = 1}^{t} e_{j + k + 2i - 1}e_{j + k + 2i}\otimes e_{n + 2i - 1}e_{n + 2i}$.
\end{flushleft}
\begin{flushleft}
 Type 2: $\overline{v_{2}} = e_{j + k+1}\otimes e_{n + 1}e_{n + 2} + \sum_{i = 1}^{t-1} e_{j + k + 2i}e_{j + k + 2i + 1}\otimes e_{n + 2i + 1}e_{n + 2i + 2}$.
\end{flushleft}

\begin{flushleft}
Type 3: $\overline{w_{1}} =  \sum_{i=1}^{t}e_{i}e_{j+k+i}\otimes e_{n + 2i - 1}e_{n + 2i}, t \leq k$.
\end{flushleft}

\begin{flushleft}
Type 4: $\overline{w_{2}} =  e_{1}\otimes e_{n+1}e_{n+2} + \sum_{i = 1}^{t-1}e_{i + 1}e_{j+k+i}\otimes e_{n+ 2i + 1}e_{n+ 2i + 2}, t \leq k$.
\end{flushleft}

\begin{flushleft}
Type 5: $\overline{x_{1}} =  \sum_{i=1}^{t} e_{j + k + i}\otimes e_{n + i}.$
\end{flushleft}

\begin{flushleft}
Type 6: $\overline{x_{2}} = e_{j + k+1}e_{j +k+2}\otimes e_{n + 1} + \sum_{i=1}^{t-1} e_{j+k+i+2} \otimes e_{n + i + 1}$.
\end{flushleft}

\begin{flushleft}
Type 7: $\overline{y_{1}} = \sum_{i=1}^{t}e_{i}\otimes e_{n+i}, t \leq k$.
\end{flushleft}

\begin{flushleft}
Type 8: $\overline{y_{2}} = e_{1}e_{k+1+j}\otimes e_{n+1} + \sum_{i=1}^{t-1}e_{i+1}\otimes e_{n+t+1}, t \leq k$.
\end{flushleft}

\begin{flushleft}
Type 9: $\overline{v_{3}} = \alpha 1_{E}\otimes 1_{E} + \overline{v_{1}}$.
\end{flushleft}

\begin{flushleft}
Type 10: $\overline{v_{4}} = \alpha 1_{E}\otimes 1_{E} + \overline{v_{2}}$.
\end{flushleft}

\vspace{0.3cm}
We have the following calculations:
\vspace{0.3cm}

\begin{flushleft}
1.1: $(\overline{v_{1}})^{t} = t!(\prod_{i = 1}^{t}e_{j+ k+2i-1}e_{j + k+2i})\otimes(\prod_{i = 1}^{t} e_{n + 2i - 1}e_{n + 2i})$.
\end{flushleft}

\vspace{0.1cm}

\begin{flushleft}
2.1: $\overline{v_{2}}^{t-1}[\overline{v_{2}},\overline{z}] = 2(t-1)!(e_{j+k+1}(\prod_{i = 1}^{t-1} e_{j + k+ 2i}e_{j+ k+2i + 1})c)\otimes((\prod_{i = 1}^{t} e_{n + 2i - 1}e_{n + 2i})d)$.
\end{flushleft}

\vspace{0.1cm}

\begin{flushleft}
3.1: $(\overline{w_{1}})^{t} = t!(\prod_{i=1}^{t}e_{i}e_{j+k+i})\otimes (\prod_{i=1}^{t} e_{n + 2i - 1}e_{n + 2i})$.
\end{flushleft}

\vspace{0.1cm}

\begin{flushleft}
4.1: $(\overline{w_{2}})^{t-1}[\overline{w_{2}},\overline{z}]_{\mathbb{Z}_{2}} = 2(t-1)!(e_{1} (\prod_{i = 1}^{t-1}e_{i + 1}e_{j+k+i})c)\otimes ((\prod_{i = 1}^{t}e_{n+2i - 1}e_{n+2i})d)$.
\end{flushleft}

\vspace{0.1cm}

\begin{flushleft}
5.1: $(\overline{x_{1}})^{t} = t!(\prod_{i=1}^{t} e_{j + k + i})\otimes (\prod_{i=1}^{t} e_{n + i}).$
\end{flushleft}

\vspace{0.1cm}

\begin{flushleft}
5.2: $(\overline{x_{1}})^{t-1}[\overline{x_{1}},\overline{z}]_{\mathbb{Z}_{2}} = 2t!((\prod_{i=1}^{t} e_{j + k + i})c)\otimes ((\prod_{i=1}^{t} e_{n + i})d)$.
\end{flushleft}

\vspace{0.1cm}

\begin{flushleft}
6.1: $\overline{x_{2}}^{t} = (t-1)!(\prod_{i=1}^{t+1}e_{j + k+i})\otimes (\prod_{i=1}^{t}e_{n+i})$ if $t$ is odd. If $t$ is even, we have $(\overline{x_{2}})^{t} = 0$.
\end{flushleft}

\vspace{0.1cm}

\begin{flushleft}
6.2: $(\overline{x_{2}})^{t-1}[\overline{x_{2}},\overline{z}]_{\mathbb{Z}_{2}} = 2(t-1)!( (\prod_{i=1}^{t+1}e_{j + k+i})c)\otimes ((\prod_{i=1}^{t}e_{n+i})d)$ if $t$ is even, $(\overline{x_{2}})^{t-1}[\overline{x_{2}},\overline{z}]_{\mathbb{Z}_{2}} = 0$ if $t$ is odd.
\end{flushleft}

\vspace{0.1cm}

\begin{flushleft}
7.1: $(\overline{y_{1}})^{t} = t!(\prod_{i=1}^{t}e_{i})\otimes (\prod_{i=1}^{t}e_{n+i})$.
\end{flushleft}

\vspace{0.1cm}

\begin{flushleft}
7.2: $(\overline{y_{1}})^{t-1}[(\overline{y_{1}}),\overline{z}] = 2t!((\prod_{i=1}^{t}e_{i})c)\otimes ((\prod_{i=1}^{t}e_{n+i})d)$.
\end{flushleft}

\vspace{0.1cm}

\begin{flushleft}
8.1: $(\overline{y_{2}})^{t} = (t-1)!(e_{1}e_{k+1+j}(\prod_{i=1}^{t-1}e_{i+1}\otimes e_{n+t+1}))\otimes (\prod_{i=1}^{t}e_{n+i})$ if $t$ is odd. $(\overline{y_{2}})^{t} = 0$ if $t$ is even.
\end{flushleft}

\vspace{0.1cm}

\begin{flushleft}
8.2: $(\overline{y_{2}})^{t-1}[\overline{y_{2}},\overline{z}] = 2(t-1)!((e_{1}e_{k+1+j}(\prod_{i=1}^{t-1}e_{i+1}\otimes e_{n+t+1}))c)\otimes ((\prod_{i=1}^{t}e_{n+i})d)$ if $t$ is even. $(\overline{y_{2}})^{t-1}[\overline{y_{2}},\overline{z}] = 0$ if $t$ is odd.
\end{flushleft}

\vspace{0.1cm}

\begin{flushleft}
9.1: $g-sum[(\overline{v_{3}})^{t}] = (\overline{v_{1}})^{t}$.
\end{flushleft}

\vspace{0.1cm}

\begin{flushleft}
10.1: $g-sum(\overline{v_{4}})^{t-1}[\overline{v_{4}},\overline{z}] = (\overline{v_{2}})^{t}$.
\end{flushleft}

\end{proposition}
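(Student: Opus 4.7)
The plan is to verify each of the fourteen identities by direct expansion in $E_{k^{*}}\otimes E$, relying throughout on three ingredients: the Grassmann anticommutation $e_{i}e_{j} = -e_{j}e_{i}$ together with $e_{i}^{2} = 0$; the tensor formula $[a_{1}\otimes b_{1}, a_{2}\otimes b_{2}]_{\mathbb{Z}_{2}} = [a_{1},a_{2}]\otimes b_{1}b_{2}$ recorded in the remark preceding the definition of $\mathcal{J}$; and the observation that $l(c)$ odd forces $[e_{i},c] = 2 e_{i} c$ whenever $e_{i}\notin supp(c)$, while $[e_{i'}e_{i''},c] = 0$ under the same hypothesis, since even-length factors are central in $E$. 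Centrone and da~Silva carry out essentially identical calculations in Lemmas 13 and 17 of \cite{Centrone-1}, so the proof follows their template type by type; the genuinely new work lies in items 9.1 and 10.1.

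For Types 1--8, write $\overline{v} = \sum_{i} s_{i}$ where the $s_{i}$ are basis elements of $\mathcal{B'}$ with pairwise disjoint tensor supports, so that in the expansion of $(\overline{v})^{t}$ or $(\overline{v})^{t-1}[\overline{v},\overline{z}]_{\mathbb{Z}_{2}}$ every sequence of indices that repeats an index vanishes (some $e_{i}^{2}=0$ appears in one tensor slot). Counting the surviving sequences with Grassmann signs tracked yields the factorials. The factor of $2$ comes from $[e_{i},c]=2e_{i}c$. If only one summand $s_{0}$ of $\overline{v}$ has an odd-length first slot (as in Types 2 and 4), then only $[s_{0},\overline{z}]_{\mathbb{Z}_{2}}$ is non-zero, one factorial is consumed, and the result is $2(t-1)!$; if every summand has an odd-length first slot (as in Types 5 and 7), each $[s_{i},\overline{z}]_{\mathbb{Z}_{2}}$ contributes and the full $t!$ is restored, giving $2t!$.

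Types 6 and 8 display a parity dichotomy because the distinguished summand $s_{0}$ has first-slot length $2$ (hence central in $E_{k^{*}}$) while its second slot has length $1$, so $s_{0}$ anticommutes with every other $s_{i}$ in the tensor algebra. Expanding $(\overline{x_{2}})^{t}$ as a sum over permutations of the $t$ summands, the sign depends only on the position $k$ of $s_{0}$ and equals $(-1)^{k-1}$; since $\sum_{k=1}^{t}(-1)^{k-1}$ equals $1$ for $t$ odd and $0$ for $t$ even, the expansion yields $(t-1)!$ copies of the canonical product in the former case and zero in the latter. The companion identity for $(\overline{x_{2}})^{t-1}[\overline{x_{2}},\overline{z}]_{\mathbb{Z}_{2}}$ flips this parity because $[s_{0},\overline{z}]_{\mathbb{Z}_{2}} = 0$, so $s_{0}$ must be supplied by the power factor rather than the commutator. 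Items 9.1 and 10.1 reduce to Types 1 and 2 respectively: since $\alpha 1_{E}\otimes 1_{E}$ is central with $[\alpha 1_{E}\otimes 1_{E}, \overline{z}]_{\mathbb{Z}_{2}} = 0$, the binomial expansion $(\overline{v_{3}})^{t} = \sum_{r=0}^{t}\binom{t}{r}\alpha^{t-r}(\overline{v_{1}})^{r}$ places all $r < t$ contributions at strictly smaller support length than the $r=t$ term, and the definition of $g\text{-sum}$ selects $(\overline{v_{1}})^{t}$; an analogous argument handles 10.1. The main technical obstacle is the sign bookkeeping in Types 6 and 8; the rest is routine Grassmann arithmetic.
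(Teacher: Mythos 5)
Your proof is correct, and it even makes explicit what the statement leaves implicit (disjointness of the supports of the evaluations from $c\otimes d$, which is needed for $[e_{i},c]=2e_{i}c$) while silently repairing the typos in Type 8 and in items 8.1 and 10.1. The paper supplies no argument of its own for this proposition --- it only points to Lemmas 13 and 17 of \cite{Centrone-1} --- and your type-by-type expansion (vanishing of repeated indices, centrality of even-length elements, $[e_{i},c]=2e_{i}c$ for odd $l(c)$, the alternating sign count over the position of the distinguished summand $s_{0}$ in Types 6 and 8, and the binomial-plus-$g$-sum reduction for 9.1 and 10.1) is exactly the computation that citation stands for, so your approach is essentially the same as the paper's.
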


\begin{definition}
Let $n_{1},n_{2},n_{3},n_{4} \in \mathbb{N}$. Let us define:
\begin{center}
$Z_{n_{1},n_{2},n_{3},n_{4}} := \{v_{1},\ldots,v_{n_{1}}\}\cup \{w_{1},\ldots,w_{n_{2}}\} \cup \{x_{1},\ldots,x_{n_{3}}\} \cup \{y_{1},\ldots,y_{n_{4}}\}$,
\end{center}
\begin{center}
$f(Z_{n_{1},n_{2},n_{3},n_{4}}) := f(v_{1},\ldots,v_{n_{1}},w_{1},\ldots,w_{n_{2}},x_{1},\ldots,x_{n_{3}},y_{1},\ldots,y_{n_{4}})$.
\end{center}
\end{definition}

\begin{definition}\label{adequate}
Let $u(Z_{n_{1},n_{2},n_{3},n_{4}})$ be an element of $SSE$. Let $z \in var(u)$. Suppose that $t = deg_{z} u$. A graded endomorphism $\phi: F\langle Z_{n_{1},n_{2},n_{3},n_{4}}\rangle \rightarrow E_{k^{*}}\otimes E$ is said to be \textit{suitable} with respect $u$ when
\begin{description}
\item For all distinct $z_{1},z_{2} \in var(u)$, we have $Supp(\phi(z_{1}))\cap Supp(\phi(z_{2})) = \emptyset$.
\end{description}
Furthermore
\begin{enumerate}
\item If $deg_{z} end(u) = 0$ and $z \in V$, then $\overline{z}$ is Type 1.

\item If $deg_{z} end(u) = 1$ and $z \in V$, then $\overline{z}$ is Type 2.

\item If $deg_{z} end(u) = 0$ and $z \in W$, then $\overline{z}$ is Type 3.

\item If $deg_{z} end(u) = 1$ and $z \in W$, then $\overline{z}$ is Type 4.

\item If $deg_{z} end(u) = 0$ and $z \in X$, then $\overline{z}$ is Type 5 if $t$ is even, $\overline{z}$ is Type 6 if $t$ is odd.

\item If $deg_{z} end(u) = 1$ and $z \in X$, then $\overline{z}$ is Type 5 if $t$ is odd, $\overline{z}$ is Type 6 if $t$ is even.

\item If $deg_{z} end(u) = 0$ and $z \in Y$, then $\overline{z}$ is Type 7 if $t$ is even, $\overline{z}$ is Type 8 if $t$ is odd.

\item If $deg_{z} end(u) = 1$ and $z \in Y$, then $\overline{z}$ is Type 7 if $t$ is odd, $\overline{z}$ is Type 8 if $t$ is even.
\end{enumerate}
\end{definition}

\begin{definition}
Let $\phi: F\langle Z_{n_{1},n_{2},n_{3},n_{4}} \rangle \rightarrow E_{k^{*}}\otimes E$ be a suitable graded homomorphism. An element $a \in \mathcal{B'}$ is
called complete with respect $\phi$ if
\begin{center}
$Supp-union(a) = \cup_{z \in Z_{n_{1},n_{2},n_{3},n_{4}}} Supp-union(\phi(z))$.
\end{center}
\end{definition}

\begin{proposition}\label{proposicao do adequado}
Let $u(Z_{n_{1},n_{2},n_{3},n_{4}})$ be an element of $SSE$. Let \newline
$\phi: F\langle Z_{n_{1},n_{2},n_{3},n_{4}}\rangle \rightarrow E_{k^{*}}\otimes E$ be a suitable graded homomorphism with respect $u$. Then:
\begin{enumerate}
\item $\phi(u) = \alpha a, \alpha \in F - \{0\}$ and $a \in \mathcal{B'}$,
\item $a$ is complete with respect to $\phi$.
\end{enumerate}
\end{proposition}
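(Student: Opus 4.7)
The plan is to compute $\phi(u)$ variable by variable, exploiting the $SSE$ normal form and the formulas of Proposition \ref{calculos}. Write $u = h_{1}h_{2}h_{3}h_{4}h_{5}$ as in Definition \ref{hs}. Because the $h_{5}$-factor has strictly increasing indices, every $z \in var(u)$ satisfies $deg_{z} end(u) \in \{0,1\}$, and if $t = deg_{z} u$ then the occurrences of $z$ in $u$ are either all in $beg(u)$ (giving a contribution $\phi(z)^{t}$) or $t-1$ in $beg(u)$ together with a single occurrence in a unique commutator $[z,z']_{\mathbb{Z}_{2}}$ of $h_{5}$ (giving a contribution of the form $\phi(z)^{t-1}[\phi(z),\phi(z')]_{\mathbb{Z}_{2}}$).

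The first step is to reorganise $\phi(u)$ so that these per-variable contributions are realised as a product of factors. This uses identity (6) of Definition \ref{identities-lucio-viviane} (triple $\mathbb{Z}_{2}$-commutators vanish on $E_{k^{*}}\otimes E$) together with the Remark $[a_{1}\otimes b_{1}, a_{2}\otimes b_{2}]_{\mathbb{Z}_{2}} = [a_{1},a_{2}]\otimes b_{1}b_{2}$, which together let one slide each commutator factor of $h_{5}$ next to the corresponding power of $\phi(z)$ in the $beg$-part, modulo signs coming from the $\mathbb{Z}_{2}$-grading and from reordering anticommuting Grassmann generators. The definition of \emph{suitable} then assigns $\phi(z)$ a type (1 through 8) designed so that Proposition \ref{calculos} evaluates each such factor to a scalar multiple of a basis element: the branching on $deg_{z} end(u) \in \{0,1\}$ chooses between formulas x.1 and x.2, while the parity of $t$ chooses between Type 5/6 or Type 7/8 for the $X$ and $Y$ variables. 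The scalars produced are products of $2$'s and of factorials $t!$ or $(t-1)!$; the degree bounds of Definition \ref{hs} together with the parity switch in Definition \ref{adequate} guarantee that every factorial arising is either below $p$ or equal to $(p-1)!$, hence nonzero in $F$ since $char F = p > 2$.

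The final step is to multiply the per-variable factors. Since \emph{suitable} requires pairwise disjoint tensor supports, no two factors overlap in $E_{k^{*}}$ or in $E$, so after reordering anticommuting generators the product is a single element of $\mathcal{B}'$ times a nonzero scalar $\alpha$, proving (1). For (2), inspect the formulas (1.1) through (8.2): in each of them the tensor support of the output is precisely $Supp-union(\phi(z))$ for the active variable, together with $supp(c)$ and $supp(d)$ from the outer $\overline{z}$ in the x.2 formulas. Iterating over all variables of $u$ one obtains $Supp-union(a) = \cup_{z \in Z_{n_{1},n_{2},n_{3},n_{4}}} Supp-union(\phi(z))$, which is the definition of complete. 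The main technical difficulty is the second paragraph: carefully separating $h_{5}$ into pieces attached to the correct $beg$-powers without producing either a forbidden triple commutator or a factorial divisible by $p$; both hazards are controlled precisely by the SSE restrictions on exponents and by the parity-based choice of Types in Definition \ref{adequate}.
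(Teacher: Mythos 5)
Your outline follows the same route as the paper, whose entire proof is the single sentence that the claim ``follows straightforward from Proposition \ref{calculos} and Definition \ref{adequate}'': decompose the $SSE$ monomial variable by variable, slide each commutator of $h_{5}$ next to the corresponding power using the vanishing of triple $\mathbb{Z}_{2}$-commutators, evaluate each block by formulas 1.1--8.2, and read completeness off the supports. Most of this is a faithful expansion of what the paper intends. The problem is your central numerical claim, that ``every factorial arising is either below $p$ or equal to $(p-1)!$, hence nonzero in $F$'' --- this is exactly the step on which the nonvanishing of $\alpha$ rests, and it is false under Definition \ref{hs} as written. Take $u = x_{1}^{p-1}[x_{1},x_{2}]_{\mathbb{Z}_{2}}$. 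This $u$ is of type $SSE$: the extra $SSE$ conditions only forbid an occurrence of $x_{i}$ in $end(h_{5})$ when $deg_{x_{i}}h_{3}$ equals $p$ exactly, and here $deg_{x_{1}}h_{3} = p-1$. Then $t = deg_{x_{1}}u = p$ is odd, so item (6) of Definition \ref{adequate} assigns $\phi(x_{1})$ Type 5, and formula 5.2 produces the coefficient $2t! = 2\,p! = 0$ in $F$; the analogous failure for $Y$-variables occurs with Type 7 and formula 7.2. So $\phi(u)=0$ for every suitable $\phi$, contradicting conclusion (1).

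This is not something you can repair by bookkeeping more carefully: $u$ above is literally a substitution instance of identity (4) of Definition \ref{identities-lucio-viviane} (and in the general case, where other variables separate $x_{1}^{p-1}$ from its commutator, the monomial still lies in $\mathcal{H}$, since modulo the triple-commutator identities the commutator factor can be moved adjacent to the power). Hence such $u$ is a $\mathbb{Z}_{2}\times\mathbb{Z}_{2}$-graded identity of $E_{k^{*}}\otimes E$ and vanishes under \emph{every} graded evaluation, so no argument can produce $\phi(u)\neq 0$. The proposition is only true if $SSE$ is read with the stronger restriction that a variable of $X\cup Y$ occurring in $end(h_{5})$ has exponent at most $p-2$ in $h_{3}$ (resp.\ $h_{4}$), equivalently total degree at most $p-1$; such monomials may legitimately be discarded in Corollary \ref{sse} precisely because they lie in $\mathcal{H}$. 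Your proof (like the paper's one-liner) silently assumes this stronger reading; to be complete you must state it and justify it, since with the literal Definition \ref{hs} the factorial bound you assert does not hold and the argument fails.
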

\begin{proof}
It follows straightforward from Proposition \ref{calculos} and Definition \ref{adequate}.
\end{proof}

\begin{remark}\label{ultimo recado}
At light of Proposition \ref{proposicao do adequado}, it is possible to define a natural homomorphism $\psi: F\langle Z_{n_{1},n_{2},n_{3},n_{4}} \rangle \rightarrow E_{k^{*}}\otimes E$ as follows:

$$\psi(z)=\left\{\begin{array}{rc}
\phi(z),&\mbox{if}\quad z \in W\cup X \cup Y,\\
\alpha_{z}1_{E}\otimes 1_{E} + \phi(z), &\mbox{if}\quad z \in V.
\end{array}\right.
$$

$\psi$ is called the homomorphism associated with the suitable homomorphism $\phi$ and the $n$-tuple $(\alpha_{v_{1}},\ldots,\alpha_{v_{n_{1}}}) \in F^{n}$.
\end{remark}

Bearing in mind the Corollary \ref{p-polinomio} and the Proposition \ref{proposicao do adequado}, we have the following.

\begin{proposition}\label{descricao final}
Let $u(Z_{n_{1},n_{2},n_{3},n_{4}})$ be an element of $SSE$. Let $f(v_{1},\ldots,v_{n_{1}})$ be a non-zero $p$-polynomial.

There exist a suitable homomorphism $\phi: F\langle Z_{n_{1},n_{2},n_{3},n_{4}}\rangle \rightarrow E_{k^{*}}\otimes E$ and an $n_{1}$-tuple $(\alpha_{v_{1}}1_{E}\otimes 1_{E},\ldots,\alpha_{v_{n_{1}}}1_{E}\otimes 1_{E})$ such that

\begin{enumerate}
\item $f(\alpha_{v_{1}}1_{E}\otimes 1_{E},\ldots,\alpha_{v_{n_{1}}}1_{E}\otimes 1_{E}) = \alpha 1_{E} \otimes 1_{E} \neq 0$
\item $g-sum(\psi(u)) = \beta b, \beta \in F - \{0\}, b \in \mathcal{B'}$,
\end{enumerate}
where $a$ is complete with respect $\phi$.
\end{proposition}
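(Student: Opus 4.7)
The plan is to build the scalars and the suitable homomorphism independently, using Corollary \ref{p-polinomio} and Proposition \ref{proposicao do adequado}, and then to link them through the associated homomorphism $\psi$ of Remark \ref{ultimo recado}. First I apply Corollary \ref{p-polinomio} to $f$ to pick an $n_{1}$-tuple $(\alpha_{v_{1}},\ldots,\alpha_{v_{n_{1}}}) \in F^{n_{1}}$ with $f(\alpha_{v_{1}} 1_{E}\otimes 1_{E},\ldots,\alpha_{v_{n_{1}}} 1_{E}\otimes 1_{E}) = \alpha \cdot 1_{E}\otimes 1_{E} \neq 0$, which is exactly (1). Next I read the degree data of $u$: for each $z \in var(u)$ I record $t = deg_{z} u$ and $deg_{z} end(u) \in \{0,1\}$, and Definition \ref{adequate} dictates which of Types 1--8 to assign to $\phi(z)$. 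I choose the shift parameters $j$ and $n$ of each type in disjoint blocks so that the tensor supports $Supp(\phi(z))$ are pairwise disjoint as $z$ ranges over $var(u)$; this is feasible since $var(u)$ is finite and each type uses only a bounded block of indices on either side of the tensor. The resulting $\phi$ is suitable with respect to $u$, and Proposition \ref{proposicao do adequado} delivers $\phi(u) = \beta b$ with $\beta \neq 0$ and $b \in \mathcal{B'}$ complete with respect to $\phi$.

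With both ingredients in hand I form $\psi$ and decompose $u = h_{1}h_{2}h_{3}h_{4}h_{5}$ as in Definition \ref{hs}. Tautologically $\psi(h_{i}) = \phi(h_{i})$ for $i = 2,3,4$, since $\psi$ and $\phi$ agree on $W \cup X \cup Y$. The offset $\psi(v) - \phi(v) = \alpha_{v}(1_{E}\otimes 1_{E})$ is central in $E_{k^{*}}\otimes E$, so inside every bracket occurring in $h_{5}$ the scalar parts cancel and $\psi(h_{5}) = \phi(h_{5})$. The genuine discrepancy between $\psi$ and $\phi$ sits in $h_{1}$: expanding each $\psi(v_{i})^{a_{i}} = (\alpha_{v_{i}}(1_{E}\otimes 1_{E}) + \phi(v_{i}))^{a_{i}}$ by the binomial theorem (legal because $1_{E}\otimes 1_{E}$ is central), I obtain $\phi(v_{i})^{a_{i}}$ as the unique top support length summand together with lower-length corrections in which some $\phi(v_{i})$-factors have been swapped for the central scalar; items 9.1 and 10.1 of Proposition \ref{calculos} confirm that each such swap strictly shortens the support length of the resulting basis element.

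Multiplying by $\phi(h_{2}h_{3}h_{4}h_{5})$, whose image is a scalar multiple of a single basis element whose support is disjoint from that of $\phi(h_{1})$ by the construction of $\phi$, yields $\psi(u) = \phi(u) + r$ in which every basis monomial of $r$ has support length strictly less than $l(b)$. Therefore $g-sum(\psi(u)) = \phi(u) = \beta b$, which is (2), and completeness of $b$ is inherited from Proposition \ref{proposicao do adequado}. The main obstacle I foresee lies in the length-accounting for the non-leading terms in the expansion of $\psi(h_{1})$: one must verify uniformly across Types 1--8 that replacing any $\phi(v_{i})$-factor by the central scalar drops the support length strictly, and that no cancellation merges several top-length summands; both are ensured by the disjointness of the supports chosen in $\phi$ together with the closed-form evaluations of Proposition \ref{calculos}.
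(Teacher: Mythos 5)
Your proposal is correct and follows the same route the paper takes: the paper justifies this proposition with the single remark that it follows from Corollary \ref{p-polinomio} together with Proposition \ref{proposicao do adequado} via the associated homomorphism $\psi$ of Remark \ref{ultimo recado}, which is exactly your skeleton. Your added support-length bookkeeping (binomial expansion of $\psi(v_i)^{a_i}$, centrality of $1_E\otimes 1_E$, and items 9.1/10.1 of Proposition \ref{calculos}) is precisely the detail the paper leaves implicit, so nothing diverges.
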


\section{Main theorem}

In the next theorem, we use an idea of (\cite{Fonseca} Theorem 3.7) and an idea of (\cite{Centrone-1} Theorem 21).

\begin{theorem}
The $T_{\mathbb{Z}_{2} \times \mathbb{Z}_{2}}$-ideals $\mathcal{I}$ and $T_{\mathbb{Z}_{2}\times \mathbb{Z}_{2}}(E_{k^{*}}\otimes E)$ are equal.
\end{theorem}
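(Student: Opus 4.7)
\medskip

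\textbf{Proof plan.} The inclusion $\mathcal{I}\subseteq T_{\mathbb{Z}_{2}\times\mathbb{Z}_{2}}(E_{k^{*}}\otimes E)$ is already essentially done: the seven generators in Definition \ref{identities-lucio-viviane} are identities by \cite{Centrone-1}, and $v_{1}^{pq}-v_{1}^{p}$ is an ordinary identity of $E$ (Proposition 3.5(3)), so it is also an identity of $E_{k^{*}}\otimes E$ when evaluated on $(0,0)$-homogeneous elements. The real task is the reverse inclusion.

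So let $f\in T_{\mathbb{Z}_{2}\times\mathbb{Z}_{2}}(E_{k^{*}}\otimes E)$. By Corollary \ref{sse}, write, modulo $\mathcal{I}$,
\[
f \;\equiv\; f_{0}+\sum_{i=1}^{n}f_{i}u_{i},
\]
with $f_{0},\ldots ,f_{n}$ $p$-polynomials in the $V$-variables and the $u_{i}\in SSE$ pairwise distinct. I will argue by contradiction, assuming this expression is non-zero modulo $\mathcal{I}$. Either some $f_{i}\neq 0$, in which case I pick $u_{i_{0}}$ to be the $SSE$-maximal monomial among those with $f_{i_{0}}\neq 0$; or all $f_{i}=0$ and only $f_{0}\neq 0$, which is handled separately by Corollary \ref{p-polinomio} (evaluating on scalar multiples of $1_{E}\otimes 1_{E}$ produces a non-zero value, contradicting $f\in T$).

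The main step is to construct an explicit substitution that isolates $u_{i_{0}}$. Apply Proposition \ref{descricao final} to $u_{i_{0}}$ and $f_{i_{0}}$: this yields a suitable graded homomorphism $\phi$ for $u_{i_{0}}$, scalars $\alpha_{v_{1}},\ldots ,\alpha_{v_{n_{1}}}$, and the associated homomorphism $\psi$ of Remark \ref{ultimo recado}, such that $f_{i_{0}}(\alpha_{v_{1}}1\!\otimes\!1,\ldots)=\alpha\,1\!\otimes\!1$ with $\alpha\neq 0$, and $g\text{-}\mathrm{sum}(\psi(u_{i_{0}}))=\beta b$ with $\beta\neq 0$ and $b\in\mathcal{B}'$ complete with respect to $\phi$. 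Since $v$-variables evaluate to $\alpha_{v}1\!\otimes\!1+\phi(v)$ where $\phi(v)$ is Type 1 or Type 2 (hence of length divisible by $2$ on each tensor factor, with $\phi(v)^{p}=0$ by Proposition \ref{calculos}), the scalar part $1_{E}\otimes 1_{E}$ commutes with everything, so for any $p$-polynomial $h$ one has $h(\psi(v_{1}),\ldots)=h(\alpha_{v_{1}}1\!\otimes\!1,\ldots)$ modulo terms of strictly smaller top-degree component; in particular the contribution of $f_{i_{0}}u_{i_{0}}$ to $g\text{-}\mathrm{sum}(\psi(f))$ is $\alpha\beta\,b\neq 0$.

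The hardest step, and where the $SSE$ total order does its work, is to verify that no other summand can cancel $\alpha\beta\,b$. For $i\neq i_{0}$ with $u_{i}\neq u_{i_{0}}$, compare $u_{i}$ and $u_{i_{0}}$ clause by clause in the $SSE$ order: either $\deg u_{i}<\deg u_{i_{0}}$, in which case every basis element appearing in $\psi(u_{i})$ has strictly smaller support length than $b$ (Proposition \ref{proposicao do adequado} applied to the $\phi$-image), and so cannot contribute to $g\text{-}\mathrm{sum}(\psi(f))$; or the $beg_{VW}$-part differs, and then some $v$- or $w$-variable whose $\phi$-support is contained in $Supp(b)$ appears with the wrong exponent in $u_{i}$, forcing the image to land on a different basis element or to vanish by nilpotency; or $beg_{VW}$ agrees but the $XY$-exponent vector differs, and the parity/length distinctions encoded in the relation $<_{\equiv_{(0,1)}}$ (together with the fact that substitutions of Types 5--8 are chosen according to the parity of $\deg_{z} u_{i_{0}}$ and $\deg_{z}end(u_{i_{0}})$ in Definition \ref{adequate}) guarantee via the computations 5.1--8.2 that $\psi(u_{i})$ is either zero or yields a basis element with support disjoint from or incomparable to $Supp(b)$; finally, if only $end$ differs, the lexicographic control on the commutator tail forces the same conclusion. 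Adding the remaining $\psi(f_{0})$, which lives in $F(1_{E}\otimes 1_{E})$ and thus has support length $0$, cannot kill $\alpha\beta\,b$ either. Hence $\psi(f)\neq 0$, contradicting $f\in T_{\mathbb{Z}_{2}\times\mathbb{Z}_{2}}(E_{k^{*}}\otimes E)$, and we conclude $f\equiv 0\pmod{\mathcal{I}}$. The main obstacle is the careful case analysis in this last paragraph, essentially a bookkeeping of supports, parities, and the auxiliary order $<_{\equiv_{(0,1)}}$, mirroring the leading-term arguments of \cite{Fonseca} and \cite{Centrone-1}.
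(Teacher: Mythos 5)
Your proposal is correct and follows essentially the same route as the paper: decompose $f$ modulo $\mathcal{I}$ via Corollary \ref{sse}, isolate the $SSE$-leading term, build the suitable substitution $\phi$ and its associated homomorphism $\psi$ (Corollary \ref{p-polinomio}, Propositions \ref{proposicao do adequado} and \ref{descricao final}), and then rule out cancellation by the case analysis on the $SSE$ order, reaching the same contradiction. Your treatment is in fact slightly more careful than the paper's at two points --- the degenerate case $f=f_{0}$ and the explicit remark that $\psi(f_{0})$ has support length $0$ --- but these are refinements of the same argument, not a different approach.
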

\begin{proof}
First of all, note that $\mathcal{I} \subset T_{\mathbb{Z}_{2}\times \mathbb{Z}_{2}}(E_{k^{*}}\otimes E)$. We will prove the reverse inclusion. Suppose for the sake of the contradiction that the reverse inclusion is false. In other words, there exists $f \in T_{\mathbb{Z}_{2}\times \mathbb{Z}_{2}}(E_{k^{*}}\otimes E) - \mathcal{I}$.

By Corollary \ref{sse}, $f \equiv f_{0} + \sum_{i=1}^{n}f_{i}u_{i}$, where $f_{0},\ldots,f_{n}$ are $p$-polynomials, $u_{1},\ldots,u_{n}$ are distinct elements of $SSE$. For simplicity, we assume that $u_{1}$ is $LT(f)$. Suppose without loss of generality that $var(f_{1}) = \{v_{1},\ldots,v_{n_{1}}\}$ and $var(u_{1}) = Z_{n_{1},n_{2},n_{3},n_{4}}$.

According to Corollary \ref{p-polinomio}, there exists an $n_{1}$-tuple $(\alpha_{v_{1}},\ldots,\alpha_{v_{n_{1}}}) \in F^{n_{1}}$ such that $f_{1}(\alpha_{v_{1}}1_{E}\otimes 1_{E},\ldots,\alpha_{v_{n_{1}}}1_{E}\otimes 1_{E}) \neq 0$. By Proposition
\ref{proposicao do adequado}, there exists a suitable graded homomorphism $\phi$ such that $\phi(u_{1}) = \beta a \neq 0, \beta \in F, a \in \mathcal{B'}$. Furthermore, $a$ is complete with respect $\phi$. Let $\psi$ be the homomorphism associated with $n_{1}$-tuple $(\alpha_{v_{1}},\ldots,\alpha_{v_{n_{1}}})$ and the suitable homomorphism $\phi$.

Let $u_{i} \neq u_{1}$. We claim que none summand of $\psi(u_{i})$ has support-length equal to $l(a)$. So, $g-sum(\psi(f)) = \beta a$. This fact will be a contradiction.

Indeed, $u_{i} < u_{1}$. Let $c$ be a summand of $\psi(u_{i})$. There are three cases to analyze.

\begin{description}
\item Case 1: $deg u_{i} < deg u_{1}$. In this situation, there exists a variable $z \in Z$ such that $deg_{z}u_{i} < deg_{z}u_{1}$. Bearing in mind the Proposition \ref{calculos} and the Definition \ref{adequate}, we conclude that $supp(\psi(z)) \not\subset supp(c).$ Consequently, $l(c) < l(a)$.

\item Case 2: $deg u_{1} = deg u_{1}$ and $u_{i} - u_{1}$ is not multi-homogeneous. In this situation, there exists a variable $z \in Z$ such that $deg_{z}u_{i} < deg_{z}u_{1}$. The conclusion is analogous to the case 1.
\item Case 3: $u_{1} - u_{i}$ is multi-homogeneous. Notice that if $beg(u_{1}) = beg(u_{i})$, then $end(u_{1}) = end(u_{i})$. Consequently $u_{i} = u_{1}$.
\subitem Subcase 3.1) $beg_{VW}(u_{i}) <_{lex-lef} beg_{VW}(u_{i})$. In this context, there exists a variable $z \in V \cup W$ such that $deg_{z}(beg(u_{i})) < deg_{z}(beg(u_{1}))$. By types 1 and 3 in Proposition \ref{calculos}, we have $\psi(u_{i}) = 0$.
\subitem Subcase 3.2) $beg_{VW}(u_{i}) = beg_{VW}(u_{i})$ and $beg_{XY}(u_{1}) \neq beg_{XY}(u_{i})$.
\subsubitem 3.2.1) Considering that $u_{1} - u_{i}$ is multi-homogeneous, it follows that $ext(exp_{XY}(u_{i})) \equiv_{(0,1)} ext(exp_{XY}(u_{1}))$. According to definition of SSE order, we have $ext(exp_{XY}(u_{i})) <_{(0,1)} \newline ext(exp_{XY}(u_{1}))$. So there exists a variable $z \cup X \cup Y$ such that $deg_{z}(beg(u_{1})$ and $|deg_{z}(beg(u_{1}) - deg_{z}(beg(u_{i}))|$ are odd. At light of types 6 and 8 in Proposition \ref{calculos}, we conclude that $\psi(u_{i}) = 0$
\end{description}

\end{proof}

\begin{remark}
In \cite{Centrone-1}, the authors calculate the $\mathbb{Z}_{2}\times \mathbb{Z}_{2}$- graded GK-dimension of $E_{k^{*}}\otimes E$ in $m$ variables over an infinite field of characteristic different of two.

Just as with the $\mathbb{Z}_{2}$-graded dimension of $E$ (see page 7, \cite{Centrone-3}), the free algebra $\frac{F\langle Z_{m,m,m,m}\rangle }{F\langle Z_{m,m,m,m}\rangle \cap T_{\mathbb{Z}_{2}\times\mathbb{Z}_{2}}(E_{k^{*}}\otimes E)}$ is finite dimensional. So, its $\mathbb{Z}_{2}\times \mathbb{Z}_{2}$ graded - Gelfand Kirilov dimension in $m$ variables is equal to $0$.
\end{remark}

\end{document}